\documentclass[a4paper,10pt]{article}
\usepackage{latexsym,amssymb,amsfonts,amsmath,amsthm,amstext,color,graphicx,times}
\usepackage[mathscr]{euscript}
\usepackage{indentfirst}
\usepackage{cite}
\usepackage{mathrsfs}
\usepackage{subcaption}
\usepackage{float}
\newcommand{\R}{\mathbb{R}}

\usepackage{wrapfig}
\usepackage[all]{xy}
\usepackage{tikz}
\usepackage{hyperref}
\usepackage{animate}
\usetikzlibrary{patterns}

\theoremstyle{plain}
\newtheorem{theorem}{Theorem}[section]

\newtheorem{proposition}[theorem]{Proposition}
\newtheorem{corollary}[theorem]{Corollary}

\theoremstyle{definition}

\newtheorem{example}[theorem]{Example}

\theoremstyle{remark}
\newtheorem{remark}[theorem]{Remark}

\DeclareMathOperator{\gra}{graph}
\DeclareMathOperator{\SVIP}{SVIP}

\newcommand{\tos}{\rightrightarrows} 

\DeclareMathOperator*{\conv}{conv}
\DeclareMathOperator*{\cl}{cl}

\newcommand{\inner}[2]{\langle #1,#2 \rangle}
%

\title{Generalized Ordinal Nash Games: Variational Approach}

\author{ 
O. Bueno
\thanks{Universidad del Pac\'ifico.  Lima, Per\'u. Email: \texttt{\{o.buenotangoa, cotrina\_je,garcia\_yv\}@up.edu.pe}} 
\and J. Cotrina\footnotemark[2] \and Y. Garc\'ia \footnotemark[3]
}

\begin{document}

\maketitle

\begin{abstract}
	It is known that the generalized Nash equilibrium problem can be reformulated as a quasivariational inequality. Our aim in this work is to introduce a variational approach to study the existence of solutions for generalized ordinal Nash games, that is, generalized games where the player preferences are binary relations that do not necessarily admit a utility representation.
\bigskip

\noindent{\bf Keywords:  Preference relations;  Ordinal Nash games;  Quasivariational inequalities}



\end{abstract}

\section{Introduction and Motivation}
The notion of generalized Nash games was introduced by Debreu in his seminal article~\cite{Debreu}. This is an extension of the classical Nash game in which the preference and the strategy set depend on the strategy of the rival players. Some economic problems can be seen as a generalized Nash game, for instance, competitive economic equilibrium problems or exchange economy models. For more details on the history of generalized Nash games, see~\cite{Facchinei2007}.

It is known that when the preference relation is represented by a concave utility function the generalized Nash game can be reformulated as a quasivariational inequality problem, see~\cite{HARKER199181,AGM-16,ASV-2016,ACS2021,FACCHINEI2007159}. This reformulation was extended to the quasiconcave case, due to the concept of the normal cone in~\cite{Aussel-Dutta,BCC2021,ACS2021}. On the other hand, recently, Milasi \emph{et al.}~\cite{MILASI2019}, used the variational approach to guarantee the existence of solutions for a competitive economic equilibrium problem, where the consumer preferences are given through a binary relation. Similarly, Donato and Villanaci~\cite{DONATO2023} reformulated an exchange economy model as a quasivariational inequality. Thus, the purpose of this manuscript is to establish necessary and sufficient conditions to guarantee the existence of solutions for generalized ordinal Nash games using the variational approach.

The paper is organized as follows. First, in Section \ref{sec:prel}, we introduce the notations used. Then, our necessary and sufficient conditions are established in Section \ref{sec:main-section}. Moreover, we establish an existence result using a quasivariational inequality reformulation.

\def\Z{\R^m}
\section{Preliminaries}\label{sec:prel}
Formally, a Nash game consists of finite number of players, say $I$. 
Each player $\nu$ controls a variable $x^\nu$ which belongs to a strategy space $X_\nu\subset\R^{n_\nu}$. 
Moreover, for each player $\nu\in I$, its rivals are denoted by $-\nu$.
Denote by $X=\prod_{\nu\in I} X_\nu$ the Cartesian product of the strategy sets, 
%
and by $X_{-\nu}=\prod_{j\neq\nu}X_j$ the Cartesian product of the strategy sets of the rivals of player $\nu$. We assume that each player $\nu$ possesses a preference relation $\succeq_\nu$, defined on $\R^n=\prod_{\nu\in I}\R^{n_\nu}$. This preference relation, together with a rival strategy $w^{-\nu}\in X_{-\nu}$,  induces a preference relation on $\R^{n_\nu}$, defined as
\[
x^\nu\succeq_{\nu,w^{-\nu}}y^\nu\quad\iff\quad (x^\nu,w^{-\nu})\succeq_{\nu}(y^\nu,w^{-\nu}).
\]  

A strategy $\hat{x}\in X$ is a \emph{Nash equilibrium} if, for each $\nu\in I$, there is not a $x^{\nu}\in X_\nu$, such that
\[
x^{\nu}\succ_{\nu,\hat{x}^{-\nu}}\hat{x}^\nu,
\] 
where $\succ_{\nu,x^{-\nu}}$ stands for the asymmetric part of $\succeq_{\nu,x^{-\nu}}$.

In a generalized ordinal game, the strategy of each player $\nu$ must belong to a set $K_\nu(x^{-\nu})\subset X_\nu$ which depends on the rival player's strategies. A strategy $\hat{x}\in K(\hat{x})=\prod_{\nu\in I}K_\nu(x^{-\nu})$ is a \emph{ generalized Nash equilibrium} if, for each $\nu\in I$, there is not a $x^{\nu}\in K_\nu(\hat{x}^{-\nu})$ such that
\[
x^{\nu}\succ_{\nu,\hat{x}^{-\nu}}\hat{x}^\nu.
\] 

For each $\nu\in I$, we consider the sets
$U_\nu^s(x)=\{y^\nu\in \R^{n_\nu}:~y^\nu\succ_{\nu,x^{-\nu}} x^\nu\}$. It is clear that $\hat{x}\in K(\hat{x})$ is a generalized Nash equilibrium if, and only if, 
\[
U^s_\nu(\hat{x})\cap K_\nu(\hat{x}^{-\nu})=\emptyset,\mbox{ for all }\nu.
\]

\section{Variational Approach}\label{sec:main-section}

We first recall some facts on quasivariational inequalities.
Consider $X$ a subset of $\R^n$ and, $T:\R^n\tos \R^n$ and $K:X\tos X$ two set-valued maps. A vector $\hat{x}\in X$ is said to be a solution of the \emph{Stampacchia quasivariational inequality problem} if $\hat{x}\in K(\hat{x})$ and there exists $\hat{x}^*\in T(\hat{x})$, such that 
\[
\langle \hat{x}^*,y-\hat{x}\rangle\geq0,\,\forall\,y\in K(\hat{x}).
\]
The solution set of the Stampacchia quasivariational inequality problems will be denoted by $\SVIP(T,K)$.

Given  a subset $A$ of  $\R^n$ and $x\in \R^n$, the ``normal cone'' of $A$ at $x$ is the set 
\[
\mathscr{N}_A(x):=\begin{cases}
	\{x^*\in \R^n\::\:\langle x^*,y-x\rangle\leq0,~\forall y\in A\},&A\neq\emptyset,\\
	\R^n,&A=\emptyset.
\end{cases}
\]
It is usual in the literature to consider the above definition whenever $A$ convex and closed, and $x\in A$. However, we will not consider such conditions in this work.

For each $\nu\in I$, we consider  the \emph{normal cone map} $N_\nu:\R^n\tos \R^{n_\nu}$ defined as
\[
N_\nu(x):=\mathscr{N}_{U_\nu^s(x)}(x^\nu).
\]
We note that if $U^s_\nu(x)=\emptyset$ then $N_\nu(x)=\R^{n_\nu}$. 

We divided this section in two parts. In the first part, we give links between quasivariational inequalities and generalized ordinal Nash games. In the second part, we establish an existence result.
\subsection{Sufficient and Necessary Conditions}

The following result states that any solution of a certain Stampacchia quasivariational inequality problem is a generalized Nash equilibrium.
\begin{theorem}\label{T1}
	For each player $\nu\in I$, we assume that
	\begin{enumerate}
		\item $K_\nu$ is  non-empty valued;
		\item $U^s_\nu(x)$ is open, for all $x\in\R^n$.
	\end{enumerate}
	If $\hat{x}\in\SVIP(N_0,K)$, where $N_0$ is defined as $N_0(x):=\prod_{\nu\in I} N_\nu(x)\setminus\{0_\nu\}$, then $\hat x$ is a generalized Nash equilibrium.
\end{theorem}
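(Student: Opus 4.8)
The plan is to argue by contradiction. Suppose $\hat x \in \SVIP(N_0,K)$ but $\hat x$ is not a generalized Nash equilibrium. By the characterization recalled just before the theorem, there exists a player $\nu \in I$ with $U^s_\nu(\hat x) \cap K_\nu(\hat x^{-\nu}) \neq \emptyset$; pick $z^\nu$ in this intersection. Since $\hat x$ solves the quasivariational inequality, $\hat x \in K(\hat x)$ and there is $\hat x^* = (\hat x^{*,\nu})_{\nu\in I} \in N_0(\hat x)$ with $\langle \hat x^*, y - \hat x\rangle \ge 0$ for all $y \in K(\hat x)$. Using the product structure of $K(\hat x) = \prod_\nu K_\nu(\hat x^{-\nu})$, I would plug in the test vector $y$ that agrees with $\hat x$ in every coordinate except the $\nu$-th, where it equals $z^\nu$; this isolates the single inequality $\langle \hat x^{*,\nu}, z^\nu - \hat x^\nu \rangle \ge 0$.

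Next I would exploit the definition of $N_0$: because $\hat x^{*,\nu} \in N_\nu(\hat x)\setminus\{0_\nu\}$, we have $\hat x^{*,\nu} \in \mathscr N_{U^s_\nu(\hat x)}(\hat x^\nu)$ and $\hat x^{*,\nu} \neq 0_\nu$. The normal-cone membership gives $\langle \hat x^{*,\nu}, y^\nu - \hat x^\nu\rangle \le 0$ for all $y^\nu \in U^s_\nu(\hat x)$; in particular $\langle \hat x^{*,\nu}, z^\nu - \hat x^\nu\rangle \le 0$. Combined with the previous step, $\langle \hat x^{*,\nu}, z^\nu - \hat x^\nu\rangle = 0$. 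The remaining work is to derive a contradiction from this equality together with the openness of $U^s_\nu(\hat x)$. Since $z^\nu \in U^s_\nu(\hat x)$ and this set is open, a whole ball $B(z^\nu,\varepsilon)$ lies inside $U^s_\nu(\hat x)$, so $\langle \hat x^{*,\nu}, w - \hat x^\nu\rangle \le 0$ for every $w$ in that ball. Taking $w = z^\nu + t\,\hat x^{*,\nu}$ for small $t>0$ yields $\langle \hat x^{*,\nu}, z^\nu - \hat x^\nu\rangle + t\|\hat x^{*,\nu}\|^2 \le 0$, i.e. $t\|\hat x^{*,\nu}\|^2 \le 0$, forcing $\hat x^{*,\nu} = 0_\nu$ — contradicting $\hat x^{*,\nu} \in N_\nu(\hat x)\setminus\{0_\nu\}$.

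The main obstacle, and the only place the hypotheses are genuinely used, is this last step: one must rule out the degenerate case where the chosen normal vector is orthogonal to the improving direction. Assumption (2), openness of $U^s_\nu(x)$, is exactly what makes the perturbation argument go through, and removing $0_\nu$ in the definition of $N_0$ is what prevents the trivial solution $\hat x^* = 0$ from satisfying the quasivariational inequality vacuously. Assumption (1), non-emptiness of $K_\nu$, is needed only to ensure $K(\hat x)$ is non-empty so that the statement $\hat x \in K(\hat x)$ is meaningful and the product test vectors are admissible. I would also note in passing that $N_0$ is well defined and the definition makes sense even when some $U^s_\nu(\hat x)$ is empty, since then $N_\nu(\hat x) = \R^{n_\nu}$ and $N_\nu(\hat x)\setminus\{0_\nu\}$ is still non-empty (as $n_\nu \ge 1$), so no coordinate of the product collapses.
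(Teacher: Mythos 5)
Your proposal is correct and follows essentially the same route as the paper's proof: contradiction via a violating strategy $z^\nu\in U^s_\nu(\hat x)\cap K_\nu(\hat x^{-\nu})$, the test vector $y=(z^\nu,\hat x^{-\nu})$ to isolate $\langle\hat x^{*,\nu},z^\nu-\hat x^\nu\rangle\geq0$, and then the normal-cone inequality sharpened to a strict one by perturbing $z^\nu$ in the direction $\hat x^{*,\nu}$ inside the open set $U^s_\nu(\hat x)$. Your write-up is in fact slightly more explicit than the paper's at the final step, where the paper simply asserts the strict inequality from openness.
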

\begin{proof}
	Assume that $\hat{x}$ is not a generalized Nash equilibrium. Then there exists a player $\nu_0$ and $x^{\nu_0}\in K_{\nu_0}(\hat{x}^{-\nu_0})$, such that
	$x^{\nu_0}\succ_{\nu_0,\hat{x}^{-\nu_0}} \hat{x}^{\nu_0}$, that is, $x^{\nu_0}\in U_{\nu_0}^s(\hat{x})$. 
	Denote $y=(x^{\nu_0},\hat x^{-\nu_0})\in K(\hat x)$. Since $\hat{x}\in\SVIP(N_0,K)$, there exists $\hat x^*\in N_0(\hat x)$ such that
	\[
	0\leq \inner{\hat{x}^*}{y-\hat{x}}= \sum_{\nu\in I}\inner{\hat{x}^{*,\nu}}{y^\nu-\hat{x}^\nu} = \inner{\hat{x}^{*,\nu_0}}{x^{\nu_0}-\hat{x}^{\nu_0}}.
	\]
	On the other hand, note that $\hat x^*\in N_0(\hat x)$ implies $0\neq \hat{x}^{*,\nu_0}\in N_{\nu_0}(\hat{x})$. Since $U^s_{\nu_0}(\hat{x})$ is open, 
	\[
	0\leq \langle \hat{x}^{*,\nu_0}, x^{\nu_0}-\hat{x}^{\nu_0}\rangle<0,
	\]
	a contradiction.
\end{proof}

We now state the converse of the previous result.
\begin{theorem}\label{T2}
	For each player $\nu\in I$, we assume that
	\begin{enumerate}
		\item $K_\nu$ is  non-empty, closed and convex valued;
		\item $U^s_\nu(x)$ is convex, for all $x\in\R^n$;
		\item $x^\nu\in \cl(U^s_\nu(x))$, for all $x\in\R^n$.
	\end{enumerate}
	If $\hat{x}$ is a generalized Nash equilibrium, then it is a solution of $\SVIP(N_0,K)$, where $N_0$ is defined in Theorem \ref{T1}.
	
\end{theorem}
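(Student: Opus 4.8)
The plan is to treat each player separately and read the required normal vector off a hyperplane separating the ``strictly better'' set from the feasible set.

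\textbf{Preliminary reduction.} First I would observe that hypothesis (3) forces $U^s_\nu(x)\neq\emptyset$ for every $\nu$ and every $x$, since $\cl(\emptyset)=\emptyset$ and so $x^\nu\in\cl(U^s_\nu(x))$ would be impossible otherwise. In particular $U^s_\nu(\hat x)$ is a \emph{nonempty} convex set. Moreover, since $\hat x$ is a generalized Nash equilibrium, $\hat x\in K(\hat x)$, hence $\hat x^\nu\in K_\nu(\hat x^{-\nu})$ for every $\nu$, and $U^s_\nu(\hat x)\cap K_\nu(\hat x^{-\nu})=\emptyset$.

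\textbf{Separation, player by player.} Fix $\nu\in I$. The sets $U^s_\nu(\hat x)$ and $K_\nu(\hat x^{-\nu})$ are nonempty, convex and disjoint in $\R^{n_\nu}$, so by the finite-dimensional separation theorem for two disjoint convex sets there exist $\xi^\nu\in\R^{n_\nu}\setminus\{0_\nu\}$ and $\alpha_\nu\in\R$ such that (after possibly replacing $\xi^\nu$ by $-\xi^\nu$) $\langle\xi^\nu,u\rangle\leq\alpha_\nu\leq\langle\xi^\nu,v\rangle$ for all $u\in U^s_\nu(\hat x)$ and all $v\in K_\nu(\hat x^{-\nu})$. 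Since $\langle\xi^\nu,\cdot\rangle$ is continuous and, by (3), $\hat x^\nu\in\cl(U^s_\nu(\hat x))$, the left inequality yields $\langle\xi^\nu,\hat x^\nu\rangle\leq\alpha_\nu$; combined with $\hat x^\nu\in K_\nu(\hat x^{-\nu})$ and the right inequality we get $\langle\xi^\nu,\hat x^\nu\rangle\geq\alpha_\nu$, hence $\langle\xi^\nu,\hat x^\nu\rangle=\alpha_\nu$. Therefore $\langle\xi^\nu,u-\hat x^\nu\rangle\leq0$ for all $u\in U^s_\nu(\hat x)$, i.e. $\xi^\nu\in\mathscr N_{U^s_\nu(\hat x)}(\hat x^\nu)=N_\nu(\hat x)$, while $\langle\xi^\nu,v-\hat x^\nu\rangle\geq0$ for all $v\in K_\nu(\hat x^{-\nu})$. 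As $\xi^\nu\neq0_\nu$, this gives $\xi^\nu\in N_\nu(\hat x)\setminus\{0_\nu\}$.

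\textbf{Assembling the solution.} Set $\hat x^*:=(\xi^\nu)_{\nu\in I}$. Then $\hat x^*\in\prod_{\nu\in I}\bigl(N_\nu(\hat x)\setminus\{0_\nu\}\bigr)=N_0(\hat x)$, and for every $y\in K(\hat x)$ one has
\[
\langle\hat x^*,y-\hat x\rangle=\sum_{\nu\in I}\langle\xi^\nu,y^\nu-\hat x^\nu\rangle\geq0,
\]
since each term is nonnegative because $y^\nu\in K_\nu(\hat x^{-\nu})$. Together with $\hat x\in K(\hat x)$, this is precisely the assertion $\hat x\in\SVIP(N_0,K)$.

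\textbf{Expected difficulty.} The delicate point is the separation step: $U^s_\nu(\hat x)$ is assumed only convex, neither open nor closed, so one cannot use strict separation and must invoke the general separation theorem for two disjoint convex subsets of $\R^{n_\nu}$ (which still produces a genuinely nonzero functional, needed for $N_0$). One also has to use hypothesis (3) twice, once to ensure $U^s_\nu(\hat x)\neq\emptyset$ and once to transport the separating inequality to the point $\hat x^\nu$; everything else is routine bookkeeping over the product structure.
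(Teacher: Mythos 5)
Your proof is correct and follows essentially the same route as the paper's: separate the nonempty, disjoint convex sets $U^s_\nu(\hat x)$ and $K_\nu(\hat x^{-\nu})$ player by player to obtain a nonzero $\xi^\nu\in N_\nu(\hat x)$, use hypothesis (3) to anchor the separating inequality at $\hat x^\nu$, and sum over $\nu$. The only cosmetic difference is that you make the intermediate constant $\alpha_\nu$ and the two uses of hypothesis (3) explicit, which the paper leaves implicit.
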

\begin{proof}
	First, notice that $\hat{x}$ is a generalized Nash equilibrium if, and only if, $\hat{x}\in K(\hat{x})$ and
	$U^s_\nu(\hat{x})\cap K_\nu(\hat{x}^{-\nu})=\emptyset$, for all $\nu\in I$. Thanks to assumption {\it 3.}  the set $U^s_\nu(\hat{x})$ is non-empty. Moreover, the sets
	$U^s_\nu(\hat{x})$ and $K_\nu(\hat{x}^{-\nu})$ are convex, due to assumptions {\it 1.} and {\it 2.}.  Now, by a separation theorem, there exists $\hat{x}^{*,\nu}\in\R^{n_\nu}\setminus\{0\}$ such that
	\[
	\langle \hat{x}^{*,\nu},x^\nu-y^{\nu}\rangle\geq0,\mbox{ for all }x^\nu\in K_\nu(\hat{x}^{-\nu})\mbox{ and all }y^\nu\in U^s_\nu(\hat{x}).
	\]
	This implies $\hat{x}^{*,\nu}\in N_\nu(\hat{x})$. By assumption {\it 3.}, $\hat{x}^\nu\in \cl(U^s_\nu(\hat{x}))$ and  we obtain that
	\[
	\langle \hat{x}^{*,\nu},x^\nu-\hat{x}^{\nu}\rangle\geq0,\mbox{ for all }x^\nu\in K_\nu(\hat{x}^{-\nu}),
	\]
	which in turn implies
	\[
	\langle \hat{x}^{*},x-\hat{x}\rangle=\sum_{\nu\in I}\langle \hat{x}^{*,\nu},x^\nu-\hat{x}^{\nu}\rangle\geq0,\mbox{ for all }x\in K(\hat{x}).
	\]
\end{proof}

The following example shows that we cannot drop assumption {\it 3.} in Theorem~\ref{T2}.

\begin{example}
	For $\nu\in\{1,2\}$, we consider the relation $\succeq_\nu$ defined on $\R^2$ by
	\[
	(a,b)\succeq_\nu (x,y)\mbox{ if, and only if, }(a,b)=(x,y)=(0,0).
	\]
	Moreover, consider the strategy maps $K_1,K_2:\R\to\R$ as
	\[
	K_1(x)=K_2(x)=[-1,1].
	\]
	It is not difficult to show that
	\[
	U^s_1(x,y)=\{a\in\R: a\succ_{1,y}x\}=\emptyset\mbox{ and }U^s_2(x,y)=\{b\in\R: b\succ_{2,x}y\}=\emptyset.
	\]
	Hence, this two-player game satisfies condition~2 in Theorem~\ref{T2}, but not condition~3. On the other hand, $\hat{x}=(0,0)$ is a Nash equilibrium, but it is not a solution of the variational inequality associated to the map $N_0$ defined in Theorem~\ref{T1}.
\end{example}

\subsection{An Existence Result}
From now on, we will use the continuity definitions for set-valued maps given in~\cite[Chapter~17]{aliprantis06}. 

Motivated by the previous subsection, and similarly to \cite{Aussel-Dutta,BCC2021}, we define the set-valued map $T:\R^n\tos\R^n$ as
\begin{align}\label{mapT}
	T(x):=\prod_{\nu\in I}\conv(N_\nu(x)\cap S_\nu(0,1)).
\end{align}
We will later use the map $T$ to construct a quasivariational inequality necessary to our goals. 
We will now establish that the map $T$ is upper hemicontinuous under suitable assumptions.
\begin{proposition}\label{Prop1}
Assume that $U^s_\nu$ is lower hemicontinuous, for all $\nu\in I$. Then the map $T$,
defined in \eqref{mapT}, is upper hemicontinuous with convex and compact values. Moreover, if $U^s_\nu(x)$ 
	 is convex, for each $x$, then $T$ is non-empty valued.
\end{proposition}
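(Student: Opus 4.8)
The plan is to analyze the map $T$ one player-block at a time, since $T$ is a Cartesian product and upper hemicontinuity, convexity, and compactness of values are all preserved by finite products. So it suffices to show each factor $T_\nu(x) := \conv(N_\nu(x)\cap S_\nu(0,1))$ has the required properties, where $S_\nu(0,1)$ is the unit sphere in $\R^{n_\nu}$.

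First I would handle convexity and compactness of values, which are the easy parts. The set $N_\nu(x)\cap S_\nu(0,1)$ is closed (intersection of the closed cone $N_\nu(x)$ with the compact sphere) hence compact; taking $\conv$ of a compact set in finite dimensions yields a compact set (Carath\'eodory plus continuity of the convex-combination map), and of course the convex hull is convex. For non-emptiness under the extra hypothesis: if $U^s_\nu(x)=\emptyset$ then $N_\nu(x)=\R^{n_\nu}$, which meets the sphere; if $U^s_\nu(x)\neq\emptyset$ and is convex, then since $x^\nu$ either lies outside $U^s_\nu(x)$ (because $y\succ_{\nu,x^{-\nu}} x^\nu$ is irreflexive, so $x^\nu\notin U^s_\nu(x)$) we can separate $x^\nu$ from the convex set $U^s_\nu(x)$, producing a nonzero normal vector, which after normalization lies in $N_\nu(x)\cap S_\nu(0,1)$.

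The main work is upper hemicontinuity. I would verify the closed-graph-type characterization: since the values of $T_\nu$ lie in the fixed compact sphere's convex hull (the closed unit ball), it is enough to show $T_\nu$ has closed graph. So take $x_k\to x$ and $u_k\in T_\nu(x_k)$ with $u_k\to u$, and show $u\in T_\nu(x)$. By Carath\'eodory each $u_k$ is a convex combination of at most $n_\nu+1$ points $v_k^{(i)}\in N_\nu(x_k)\cap S_\nu(0,1)$ with coefficients $\lambda_k^{(i)}$; passing to a subsequence, the coefficients converge to some $\lambda^{(i)}$ and each $v_k^{(i)}$ converges to some $v^{(i)}\in S_\nu(0,1)$ (the sphere is compact). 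It then remains to show $v^{(i)}\in N_\nu(x)$, i.e. $\langle v^{(i)}, y - x^\nu\rangle \le 0$ for all $y\in U^s_\nu(x)$; then $u=\sum_i\lambda^{(i)}v^{(i)}\in\conv(N_\nu(x)\cap S_\nu(0,1))$ as desired.

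The crux — and the step where lower hemicontinuity of $U^s_\nu$ enters — is precisely this membership $v^{(i)}\in N_\nu(x)$. Fix $y\in U^s_\nu(x)$; by lower hemicontinuity of $U^s_\nu$ at $x$, there exist $y_k\in U^s_\nu(x_k)$ with $y_k\to y$. Since $v_k^{(i)}\in N_\nu(x_k)=\mathscr{N}_{U^s_\nu(x_k)}(x_k^\nu)$, we have $\langle v_k^{(i)}, y_k - x_k^\nu\rangle \le 0$; letting $k\to\infty$ and using $v_k^{(i)}\to v^{(i)}$, $y_k\to y$, $x_k^\nu\to x^\nu$ gives $\langle v^{(i)}, y - x^\nu\rangle \le 0$. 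Since $y\in U^s_\nu(x)$ was arbitrary, $v^{(i)}\in N_\nu(x)$. One edge case to dispatch: if $U^s_\nu(x)=\emptyset$ the condition $v^{(i)}\in N_\nu(x)=\R^{n_\nu}$ is automatic, so closedness of the graph holds trivially there. I expect the only genuine subtlety is making sure the Carath\'eodory representation plus subsequence extraction is set up cleanly so that the limit combination still has the right number of terms and unit-norm extreme points; everything else is a routine limiting argument powered by the lower hemicontinuity hypothesis.
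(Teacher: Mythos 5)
Your proposal is correct and follows essentially the same route as the paper: reduce to the factors $\conv(N_\nu(\cdot)\cap S_\nu(0,1))$, show each has closed graph with values in a fixed compact set (hence is upper hemicontinuous), and obtain non-emptiness by separating $x^\nu$ from the convex set $U^s_\nu(x)$. The only difference is that the paper outsources two steps to citations --- closedness of $N_\nu$ under lower hemicontinuity of $U^s_\nu$ (Proposition~25 in~\cite{donato_variational_2023}) and preservation of upper hemicontinuity under convex hulls (Theorem~17.35 in~\cite{aliprantis06}) --- whereas you prove both inline via the $y_k\to y$ approximation argument and Carath\'eodory, which is exactly what those cited results amount to.
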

\begin{proof}
	It is clear that $T(x)$ is convex and compact because is the Cartesian product of convex and compact sets. 
	Since $U^s_\nu$ is lower hemicontinuous, from Proposition~25 in~\cite{donato_variational_2023}, we obtain that $N_\nu$ is closed.
	Thus, the map $D_\nu:\R^n\tos \R^{n_\nu}$ defined by $D_\nu(x)=N_\nu(x)\cap S_\nu(0,1)$ is closed, because 
	$\gra(D_\nu)=\gra(N_\nu)\cap (\R^n\times S_\nu(0,1))$. By the Closed Graph Theorem, $D_\nu$ is upper hemicontinuous. Consequently, by Theorem 17.35 in \cite{aliprantis06}, the map $\conv(D_\nu)$ is upper hemicontinuous.
	The result now follows from Theorem 17.28 in \cite{aliprantis06}.
	
	Finally, if $U^s_\nu(x)=\emptyset$, then there is nothing to prove. Otherwise, since  $U^s_\nu(x)$
	is convex and $x^\nu\notin U^s_\nu(x)$, using a separation theorem, there exists $x^{*,\nu}\in\R^{n_\nu}\setminus\{0\}$ such that
	\[
	\langle x^{*,\nu}, y^\nu-x^\nu\rangle\leq0\mbox{ for all }y^\nu\in U^s_\nu(x).
	\]
	That means $\lambda x^{*,\nu}\in N_\nu(x)$, for all $\lambda>0$, and this implies $N_\nu(x)\cap S_\nu(0,1)\neq\emptyset$. The result follows.
\end{proof}

\begin{remark}
	In Proposition~\ref{Prop1}, to guarantee the lower hemicontinuity of $U^s_\nu$, 
	it is enough that the asymmetric relation $\succ_\nu$ is an open subset of $\R^n\times\R^n$. 
	In that sense, Bergstrom \emph{et al.}~\cite{BPR1976} gave characterizations on this topological property. On the other hand, Shafer~\cite{Shafer1974} showed that $\succ_\nu$ is an open subset of $\R^n\times\R^n$, provided that there exists a continuous function $f_\nu:\R^n\times\R^n\to\R$ satisfying $f_\nu(x,y)>0$ if, and only if, $x\succ_\nu y$. It is important to note that the openness of $\succ_\nu$ is not necessary. For instance, consider the relation $\succeq$ defined on $\R^2$ as
	\[
	(a,b)\succeq (x,y)\mbox{ if, and only if, }a\geq0\mbox{ and }b\geq y.
	\]
	It is not difficult to see that $\succ$ is not an open subset of $\R^2\times\R^2$. However,
	\[
	U^s(x,y)=\begin{cases}
		[0,+\infty[,& x<0,\\
		\emptyset,&\text{otherwise},
	\end{cases}
	\]
	which is clearly lower hemicontinuous.
\end{remark}

Finally, we are ready to state  our existence result.
\begin{theorem}\label{main-result}
	For each player $\nu\in I$, assume that
	\begin{enumerate}
		\item $X_\nu$ is non-empty, convex and compact;
		\item $K_\nu$ is continuous with  non-empty, closed and convex values;
		\item $U^s(x)$ is convex and open, for all $x\in\R^n$;
		\item $U^s_\nu$ is lower hemicontinuous.
	\end{enumerate}
	Then, there exists at least a generalized Nash equilibrium.
\end{theorem}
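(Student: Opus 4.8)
The plan is to realize the game as the Stampacchia quasivariational inequality $\SVIP(T,K)$, with $T$ the operator from \eqref{mapT} and $K(x)=\prod_{\nu\in I}K_\nu(x^{-\nu})$, to obtain a solution from a known existence theorem for quasivariational inequalities, and then to verify that every solution of $\SVIP(T,K)$ is a generalized Nash equilibrium, arguing in the spirit of Theorem~\ref{T1}.

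For the existence step I would invoke the classical result: if $C\subseteq\R^n$ is non-empty, convex and compact, $K:C\tos C$ is continuous with non-empty, closed and convex values, and $F:C\tos\R^n$ is upper hemicontinuous with non-empty, convex and compact values, then $\SVIP(F,K)\neq\emptyset$. Taking $C=X=\prod_\nu X_\nu$, the hypotheses hold: $X$ is non-empty, convex and compact by assumption~1; $K$, being a finite product of the maps $x\mapsto K_\nu(x^{-\nu})$ (each a composition of a coordinate projection with a continuous map), is continuous with non-empty, closed and convex values by assumption~2 (its values being compact since $X$ is); and $T$ is upper hemicontinuous with non-empty, convex and compact values by Proposition~\ref{Prop1} (which uses the lower hemicontinuity and convexity of $U^s_\nu$ from assumptions~4 and~3). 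Hence there exists $\hat x\in\SVIP(T,K)$; in particular $\hat x\in K(\hat x)$.

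It remains to prove that $\hat x$ is a generalized Nash equilibrium, which I would do by contradiction. Suppose there are a player $\nu_0$ and $x^{\nu_0}\in K_{\nu_0}(\hat x^{-\nu_0})$ with $x^{\nu_0}\succ_{\nu_0,\hat x^{-\nu_0}}\hat x^{\nu_0}$, i.e.\ $x^{\nu_0}\in U^s_{\nu_0}(\hat x)$; in particular $U^s_{\nu_0}(\hat x)\neq\emptyset$. With $y=(x^{\nu_0},\hat x^{-\nu_0})\in K(\hat x)$, the definition of $\SVIP(T,K)$ furnishes $\hat x^*\in T(\hat x)$ with
\[
0\leq\inner{\hat x^*}{y-\hat x}=\sum_{\nu\in I}\inner{\hat x^{*,\nu}}{y^\nu-\hat x^\nu}=\inner{\hat x^{*,\nu_0}}{x^{\nu_0}-\hat x^{\nu_0}}.
\]
Here $\hat x^{*,\nu_0}\in\conv\bigl(N_{\nu_0}(\hat x)\cap S_{\nu_0}(0,1)\bigr)$, and the key point is that this set is contained in $N_{\nu_0}(\hat x)\setminus\{0\}$. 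Indeed, since $U^s_{\nu_0}(\hat x)$ is non-empty and open, the set $U^s_{\nu_0}(\hat x)-\hat x^{\nu_0}$ spans $\R^{n_{\nu_0}}$, so the convex cone $N_{\nu_0}(\hat x)=\mathscr{N}_{U^s_{\nu_0}(\hat x)}(\hat x^{\nu_0})$ is pointed; a non-trivial convex combination of unit vectors of a pointed cone cannot vanish (it would force a pair $u,-u$ into the cone), hence $0\notin\conv\bigl(N_{\nu_0}(\hat x)\cap S_{\nu_0}(0,1)\bigr)$. Thus $0\neq\hat x^{*,\nu_0}\in N_{\nu_0}(\hat x)$. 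Finally, as $U^s_{\nu_0}(\hat x)$ is open and $\hat x^{*,\nu_0}\neq0$, perturbing $x^{\nu_0}$ slightly along $\hat x^{*,\nu_0}$ keeps it in $U^s_{\nu_0}(\hat x)$ and, together with $\hat x^{*,\nu_0}\in N_{\nu_0}(\hat x)$, yields $\inner{\hat x^{*,\nu_0}}{x^{\nu_0}-\hat x^{\nu_0}}<0$, contradicting the displayed inequality. Therefore $\hat x$ is a generalized Nash equilibrium.

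I expect the main obstacle to be the assertion $\hat x^{*,\nu_0}\neq0$: passing from the cone-valued map $N_0$ of Theorem~\ref{T1} to the bounded, convex- and compact-valued map $T$ needed for the existence theorem must not create a zero component at a player who can improve, and openness of $U^s_{\nu_0}(\hat x)$ is exactly what rules this out via pointedness of $N_{\nu_0}(\hat x)$. One cannot shortcut this by claiming $\SVIP(T,K)\subseteq\SVIP(N_0,K)$, because for a player $\nu$ with $U^s_\nu(\hat x)=\emptyset$ one has $N_\nu(\hat x)=\R^{n_\nu}$ and the $\nu$-th factor of $T(\hat x)$ is the closed unit ball, which contains $0$; this is harmless, however, since such a player has no improving deviation. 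A secondary task is to pin down the precise quasivariational-inequality existence theorem being invoked and to check that its hypotheses coincide with those listed in the statement.
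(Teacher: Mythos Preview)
Your proof is correct and follows essentially the same strategy as the paper: apply Proposition~\ref{Prop1} to $T$, invoke a quasivariational-inequality existence theorem (the paper uses Theorem~4 of Shih--Tan~\cite{shih_generalized_1985}), and then exploit the openness of $U^s_{\nu}(\hat x)$ to deduce pointedness of $N_\nu(\hat x)$, hence nonvanishing of the relevant component of $\hat x^*$, exactly as in the paper's contradiction step. Your organization---arguing only at the deviating player $\nu_0$, where $U^s_{\nu_0}(\hat x)\neq\emptyset$ is automatic---is in fact slightly tidier than the paper's case split ``$T(\hat x)\subset N_0(\hat x)$'' versus ``$0\in\conv(N_\nu(\hat x)\cap S_\nu(0,1))$ for some $\nu$'', whose second branch tacitly requires $U^s_\nu(\hat x)\neq\emptyset$ to reach the contradiction; your closing remark about players with $U^s_\nu(\hat x)=\emptyset$ pinpoints precisely this issue.
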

\begin{proof}
	By Proposition \ref{Prop1}, the map $T$ is upper hemicontinuous with convex, compact and non-empty values.
Due to Theorem~4 in~\cite{shih_generalized_1985}, the Stampacchia quasivariational inequality problem associated to $T$ and $K$ admits a solution, say $\hat{x}$. 
	If $T(\hat{x})\subset N_0(\hat{x})$, where $N_0$ is defined in Theorem \ref{T1}, the result follows from Proposition \ref{Prop1} and Theorem \ref{T1}. If there exists $\nu\in I$ such that
	$0\in \conv(N_\nu(\hat{x})\cap S_\nu(0,1))$, then there exist $x_1^{*,\nu},x_2^{*,\nu},\dots, x_m^{*,\nu} \in N_\nu(\hat{x})\cap S_\nu(0,1)$ and $t_1,t_2,\dots,t_m\in[0,1]$, such that
	\[
	0=\sum_{i=1}^m t_i x_i^{*,\nu}\mbox{ and }\sum_{i=1}^mt_i=1.
	\]
	Let $j\in\{1,2,\dots,m\}$ such that $t_j\neq0$. For such $j$ we have
	\[
	-x_j^{*,\nu}=\sum_{i\neq j} \dfrac{t_i}{t_j}x_i^{*,\nu}.
	\]
	Thus, $x_j^{*,\nu}$ and $-x_j^{*,\nu}$ belong to $N_\nu(\hat{x})$. However, since 	$U^s_\nu(\hat{x})$ is open, this implies that $x_j^{*,\nu}=0$. Thus, we get a contradiction.
\end{proof}

The following example shows that our result cannot be deduced from Theorem 3 in \cite{scalzo_existence_2022}.
\begin{example}
	Let $I=\{1,2\}$ and, for each $\nu\in I$, consider the relation $\succeq_\nu$ on $\R^2$ defined as
	\[
	x\succeq y\mbox{ if, and only if, }x^\nu\geq y^\nu.
	\]
	Clearly, $\succeq_\nu$ satisfies all assumptions of Theorem~\ref{main-result}.
	Now, for each $\nu\in I$ consider the set $X_\nu=[-1,1]$ and the maps $K_\nu:X_{-\nu}\tos X_\nu$ and $Q_\nu:\R^2\tos X_\nu$ defined respectively as $K_\nu(x^{-\nu})=X_\nu$ and
	\[
	Q_\nu(x)=\{z^\nu\in X_\nu\::\:(z^\nu,x^{-\nu})\succeq_\nu x,\mbox{ for all }z^\nu\in X_\nu\},
	\]
	with $x=(x^\nu,x^{-\nu})$. In this context, we obtain a generalized game $H=\langle X_\nu,K_\nu,Q_\nu\rangle$, according to \cite[\S 4.3]{scalzo_existence_2022}. In this case, existence of Nash equilibria is guaranteed by Theorem~\ref{main-result}. However,  as $H$ is not \emph{uniform quasi-concave}~\cite[Definition 5]{scalzo_existence_2022}, we cannot apply Theorem 3 in \cite{scalzo_existence_2022}.
\end{example}

When the preference relation $\succeq_\nu$ is defined via a utility function $\theta_\nu$, Theorem~\ref{main-result} generalizes a classical result due to Arrow and Debreu~\cite[Lemma~2.5]{Arrow-Debreu}.
\begin{corollary}
Suppose for each $\nu\in I$, $X_\nu$ is  a compact, convex and non-empty subset of  $\R^{n_\nu}$, the objective function $\theta_\nu$ is continuous and quasi-concave with respect to its variable and the constraint map $K_\nu$ is continuous with convex, compact and non-empty values. Then, there exists at least one generalized Nash equilibrium.
\end{corollary}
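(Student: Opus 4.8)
The plan is to check that the four hypotheses of Theorem~\ref{main-result} hold in this setting, so that the conclusion follows at once. Recall that here $x\succeq_\nu y$ means $\theta_\nu(x)\geq\theta_\nu(y)$, hence for each $x\in\R^n$ and each $\nu\in I$,
\[
U^s_\nu(x)=\{y^\nu\in\R^{n_\nu}\::\:\theta_\nu(y^\nu,x^{-\nu})>\theta_\nu(x^\nu,x^{-\nu})\}.
\]
First I would observe that hypothesis~1 of Theorem~\ref{main-result} (each $X_\nu$ non-empty, convex and compact) and hypothesis~2 (each $K_\nu$ continuous with non-empty, closed and convex values) are part of the assumptions of the corollary, using that a compact-valued map is closed-valued.

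Next I would verify hypothesis~3. Since $\theta_\nu$ is continuous, the strict upper level set $U^s_\nu(x)$ is open; since $\theta_\nu(\cdot,x^{-\nu})$ is quasi-concave, that same set is convex. Thus $U^s_\nu(x)$ is convex and open for every $x$, as required. The only point that needs a genuine argument is hypothesis~4, the lower hemicontinuity of $U^s_\nu$. Following the Remark after Proposition~\ref{Prop1}, it suffices to show that $\succ_\nu$ is an open subset of $\R^n\times\R^n$; for this I would invoke Shafer's criterion quoted in that Remark, applied to $f_\nu(x,y):=\theta_\nu(x)-\theta_\nu(y)$. This function is continuous and satisfies $f_\nu(x,y)>0$ if and only if $x\succ_\nu y$, so $\succ_\nu$ is open, and therefore $U^s_\nu$ is lower hemicontinuous.

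With all four hypotheses in place, Theorem~\ref{main-result} yields a generalized Nash equilibrium, which is the assertion of the corollary. No step is a real obstacle here; the only care required is in recognizing that quasi-concavity of $\theta_\nu$ in its own variable is precisely what makes the strict upper level sets convex, and in routing the continuity of $\theta_\nu$ through Shafer's criterion to obtain the lower hemicontinuity demanded by hypothesis~4 (note that, unlike in Theorem~\ref{T2}, no condition of the form $x^\nu\in\cl(U^s_\nu(x))$ is needed).
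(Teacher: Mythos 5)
Your proof is correct and follows the intended route: the paper states the corollary without a printed proof, and the only natural derivation is exactly what you do, namely verifying hypotheses 1--4 of Theorem~\ref{main-result} (openness of $U^s_\nu(x)$ from continuity, convexity from quasi-concavity, and lower hemicontinuity of $U^s_\nu$ via the openness of $\succ_\nu$ as in the Remark). Your use of Shafer's criterion with $f_\nu(x,y)=\theta_\nu(x)-\theta_\nu(y)$ is a clean way to dispatch hypothesis~4, and the remaining checks are handled correctly.
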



\end{document}